\newtheorem{prop}{Proposition}
\newtheorem{thm}{Theorem}
\DeclareMathOperator{\Card}{Card}
\def\({\left(}
\def\){\right)}
\def\[{\left[}
\def\]{\right]}
\def\<{\langle}
\def\>{\rangle}
\def\la{\lambda}
\begin{document}

\title{Sums of the digits in bases $2$ and $3$}

\author{Jean-Marc Deshouillers, Laurent Habsieger, \\Shanta Laishram, Bernard Landreau}

\pagenumbering{arabic}

\date{\today}

\maketitle

\begin{flushright}
\emph{To Robert Tichy, for his $60$th birthday}
\end{flushright}

\begin{abstract}
Let $b \ge 2$ be an integer and let $s_b(n)$ denote the sum of the digits of the representation of an integer $n$ in base $b$.  For sufficiently large $N$, one has
\begin{equation}\notag
\Card \{n \le N : \left|s_3(n) - s_2(n)\right| \le 0.1457205 \log n \} \, > \, N^{0.970359}.
\end{equation}
The proof only uses the separate (or marginal) distributions of the values of $s_2(n)$ and $s_3(n)$.
\end{abstract}

\section{Introduction} 
For integers $b \ge 2$ and $n\ge 0$, we denote by ``the sum of the digits of $n$ in base $b$" the quantity
\begin{equation}\notag
s_b(n) = \sum_{j \ge 0} \varepsilon_j, \text{ where } n = \sum_{j \ge 0} \varepsilon_j b^j \text{ with } \forall j : \varepsilon_j \in \{0, 1, \ldots, b-1\}.
\end{equation}

Our attention on the question of the proximity of $s_2(n)$ and $s_3(n)$ comes from the apparently non related question of the distribution of the last non zero digit of $n!$ in base $12$ (cf. \cite{DR} and \cite{De}).\footnote{Indeed, if the last non zero digit of $n!$ in base $12$ belongs to $\{1, 2, 5, 7, 10, 11\}$ then $|s_3(n) - s_2(n)| \le 1$; this seems to occur infinitely many times.}\\

Computation shows that there are $48~266~671~607$ positive integers up to $10^{12}$ for which $s_2(n) = s_3(n)$, but it seems to be unknown whether there are infinitely many integers $n$ for which $s_2(n) = s_3(n)$ or even for which $|s_2(n) - s_3(n)|$ is significantly small.\\

We do not know the first appearance of the result we quote as Theorem \ref{almostall}; in any case, it is a straightforward application of the fairly general main result of N. L. Bassily and  I. K\'{a}tai \cite{BK}. We recall that a sequence $\mathcal{A} \subset \mathbb{N}$ of integers is said to have asymptotic natural density $1$ if
$$
\Card \{n \le N : n \in \mathcal{A} \} = N + o(N).
$$

\begin{thm}\label{almostall}
Let $\psi$ be a function tending to infinity with its argument. The sequence of natural numbers $n$ for which
\begin{eqnarray}\notag
\left(\frac{1}{\log 3} - \frac{1}{\log 4} \right) \log n - \psi(n) \sqrt{\log n} &\le& s_3(n) - s_2(n)\\
\notag
&\le& \left(\frac{1}{\log 3} - \frac{1}{\log 4} \right) \log n + \psi(n) \sqrt{\log n}
\end{eqnarray}
has asymptotic natural density $1$.
\end{thm}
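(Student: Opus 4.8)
The plan is to deduce Theorem \ref{almostall} from the general central limit theorem of Bassily and K\'{a}tai \cite{BK}, which describes the distribution of $f(n)$ for additive-type functions built from digit expansions. First I would recall the precise statement from \cite{BK}: for a fixed base $b$, the function $s_b(n)$, suitably normalized, obeys a central limit theorem, namely
\begin{equation}\notag
\frac{1}{N}\Card\left\{n \le N : s_b(n) \le \mu_b(N) + t\,\sigma_b(N)\right\} \longrightarrow \Phi(t)
\end{equation}
uniformly in $t \in \R$, where $\Phi$ is the standard normal distribution function, $\mu_b(N) = \frac{b-1}{2}\log_b N$ is the mean and $\sigma_b(N) = c_b\sqrt{\log N}$ with $c_b = \sqrt{(b^2-1)/(12\log b)}$ the standard deviation. (In fact \cite{BK} gives this for $f(n) = g(s_b(n))$ with $g$ of polynomial growth, and more generally for sums $\sum h(\varepsilon_j)$; we only need the plain case $g = \mathrm{id}$.) The key observation is that $\mu_2(N)/\log N \to \tfrac1{2\log 2} = \tfrac1{\log 4}$ and $\mu_3(N)/\log N \to \tfrac1{\log 3}$, so the displayed centering constant $\left(\tfrac1{\log 3}-\tfrac1{\log 4}\right)\log n$ is exactly (asymptotically) $\mu_3(n) - \mu_2(n)$.

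Next I would make the passage from ``$s_b(n)$ lies near its mean'' to the two-sided inequality in the theorem. For a function $\psi \to \infty$ which, without loss of generality, we may take to grow arbitrarily slowly (replacing $\psi$ by $\min(\psi,\psi^{1/2})$ or similar if needed, since a smaller $\psi$ gives a stronger statement), pick an auxiliary function $\omega(n) \to \infty$ with $\omega(n) = o(\psi(n))$. Applying the central limit theorem to base $3$ with $t = \pm\omega(N)$ shows that all but $o(N)$ integers $n \le N$ satisfy $|s_3(n) - \mu_3(n)| \le \omega(n)\,\sigma_3(n) \le \tfrac12 \psi(n)\sqrt{\log n}$ for $N$ large; here one must be slightly careful because the normalization uses $N$ while the claim is about $n$, but since $\log n = \log N + O(1)$ for $n \in (N/\log N, N]$ and the exceptional count on $[1, N/\log N]$ is trivially $o(N)$, this dyadic-type localization causes no trouble. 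The same argument applied to base $2$ gives $|s_2(n) - \mu_2(n)| \le \tfrac12\psi(n)\sqrt{\log n}$ outside an exceptional set of size $o(N)$. Adding the two estimates and using $\mu_3(n) - \mu_2(n) = \left(\tfrac1{\log 3}-\tfrac1{\log 4}\right)\log n + o(\sqrt{\log n})$ yields
\begin{equation}\notag
\left|s_3(n) - s_2(n) - \left(\tfrac1{\log 3}-\tfrac1{\log 4}\right)\log n\right| \le \psi(n)\sqrt{\log n}
\end{equation}
for all $n \le N$ outside a set of size $o(N)$, which is exactly the asserted density-one statement.

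I expect the main point requiring care — rather than a genuine obstacle — to be the bookkeeping in this last step: matching the $N$-normalization of \cite{BK} with the $n$-dependent bounds in the theorem, absorbing the $o(\sqrt{\log n})$ discrepancy between $\mu_b(n)$ and the clean constants $\tfrac1{\log 4}\log n$, $\tfrac1{\log 3}\log n$ into the $\psi(n)\sqrt{\log n}$ window, and justifying that one may shrink $\psi$ to be slowly growing. None of these is deep, since the error terms are all of order $\sqrt{\log n}$ or smaller against a window of order $\psi(n)\sqrt{\log n} \gg \sqrt{\log n}$; the substantive input is entirely the Bassily--K\'{a}tai central limit theorem, and the rest is a union bound over two exceptional sets of density zero. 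An alternative, more self-contained route would be to prove the two marginal central limit theorems directly — $s_b(n)$ for $n < b^k$ is a sum of $k$ i.i.d.\ uniform digits on $\{0,\dots,b-1\}$, so the classical CLT with an error term (Berry--Esseen) applies immediately — but invoking \cite{BK} is cleaner and is what the text announces.
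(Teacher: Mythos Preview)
Your proposal is correct and follows essentially the same route as the paper: invoke the Bassily--K\'{a}tai central limit theorem separately for $s_2$ and $s_3$ to conclude that each lies within $o(\psi(n))\sqrt{\log n}$ of its mean on a set of density $1$, and then intersect these two density-$1$ sets (equivalently, take a union bound on the exceptional sets). The paper states this in a single line (``the set under our consideration is the intersection of 2 sets of density 1''), while you have spelled out the bookkeeping more carefully, but the argument is the same.
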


Our main result is that there exist infinitely many $n$ for which $\left|s_3(n) - s_2(n) \right|$ is significantly smaller than $\left(\frac{1}{\log 3} - \frac{1}{\log 4} \right) \log n = 0.18889... \log n$. More precisely we have the following:

\begin{thm}\label{main}
For sufficiently large $N$, one has
\begin{equation}\label{eqmain}
\Card \{n \le N : \left|s_3(n) - s_2(n)\right| \le 0.1457205 \log n \} \, > \, N^{0.970359}. 
\end{equation}
\end{thm}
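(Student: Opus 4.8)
The strategy I would pursue is a counting argument that plays the two digit-sum statistics off against each other. The key point is that $s_2$ and $s_3$ each behave, on an interval $[1,N)$, like a sum of roughly independent bounded random variables, so their distributions are sharply concentrated but with computable large-deviation tails. Write $L = \log_2 N$ and $M = \log_3 N$ for the approximate number of binary and ternary digits. The plan is to fix a target value $t$ (a ``typical common value'' of the two digit sums, to be optimized) and to count, separately, how many $n<N$ have $s_2(n)$ close to $t$ and how many have $s_3(n)$ close to $t$; if both counts exceed $N/2$ in a suitable sense, the two sets must intersect in many integers, and for those integers $|s_3(n)-s_2(n)|$ is small. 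Crucially, as the abstract stresses, this uses only the \emph{marginal} distributions of $s_2$ and $s_3$, never their joint distribution.

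First I would record the relevant large-deviation estimates. For base $b$, among the $b^k$ integers with at most $k$ base-$b$ digits, the number whose digit sum equals $m$ is the coefficient of $x^m$ in $(1+x+\dots+x^{b-1})^k$; by a standard Chernoff/saddle-point bound, for $m = \alpha k (b-1)$ with $\alpha \in (0,1)$ this count is $b^{k(1-H_b(\alpha) + o(1))}$ for an explicit rate function (essentially a relative entropy). Equivalently: the number of $n < N$ with $s_b(n) \le c\log N$ is $N^{\gamma_b(c) + o(1)}$ for an explicit decreasing function $\gamma_b$ on the relevant range, with $\gamma_b(c) = 1$ once $c$ reaches the mean $\tfrac{b-1}{2\log b}$. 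I would make these two one-variable functions (for $b=2$ and $b=3$) fully explicit. Then I would choose the threshold value $\tau$ so that $s_2(n) \le \tau \log N$ holds for $N^{\alpha}$ integers and $s_3(n) \ge (\tau - \delta)\log N$-type tail conditions can be arranged; the point is to find a window $[\tau_1 \log N, \tau_2 \log N]$ with $\tau_2 - \tau_1 \le 0.1457205 \log N / \log N$ such that both $\#\{n<N: s_2(n) \in \text{window-ish}\}$ and $\#\{n<N: s_3(n)\in\text{window-ish}\}$ are $> N^{0.970359 + \epsilon}$, forcing an intersection of size $> N^{2(0.970359)-1} > N^{0.940}$ — wait, that only gives the exponent $2\gamma - 1$, so to land at $0.970359$ one actually wants one-sided tail events whose \emph{complements} are small: i.e. count $n$ with $s_2(n) > \tau\log N$ (a set of size $< N - N^{0.970359}$, say of the form $N(1 - N^{-\eta})$ is too weak) — more precisely I would use that $\#\{s_2 \le \tau\log N\} \ge N - N^{\beta}$ and $\#\{s_3 \ge \tau'\log N\} \ge N - N^{\beta'}$ with $\tau - \tau' $ small and $\beta,\beta' < 0.970359$, so the intersection has size $\ge N - N^{\beta} - N^{\beta'} > N^{0.970359}$, and on it $s_3(n) - s_2(n) \ge (\tau' - \tau)\log n$ while a symmetric pair of bounds controls the other side.

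So the real shape is: pick constants $\tau_2 > \tau_1$ with $\tau_2 - \tau_1 = 0.1457205$; arrange $s_2(n) \le \tau_2 \log N$ outside a set of size $N^{\beta_1}$, $s_3(n) \le \tau_2\log N$ outside size $N^{\beta_2}$, $s_2(n) \ge \tau_1 \log N$ outside size $N^{\beta_3}$, $s_3(n) \ge \tau_1\log N$ outside size $N^{\beta_4}$ (these four exceptional sets are governed by the four large-deviation exponents $\gamma_2(\tau_1), \gamma_2(\tau_2), \gamma_3(\tau_1), \gamma_3(\tau_2)$), so that off the union — of size at most $4\max N^{\beta_i}$ — every $n$ satisfies $\tau_1\log N \le s_2(n), s_3(n) \le \tau_2\log N$, hence $|s_3(n)-s_2(n)| \le (\tau_2-\tau_1)\log N = 0.1457205\log N \le 0.1457205\log n$ for $n$ in a dyadic top piece $[N/2, N)$. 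The optimization is: choose $\tau_1 < \tau_2$ (with difference exactly $0.1457205$, or a hair less to absorb the $\log n$ vs $\log N$ slippage and the $o(1)$'s) minimizing $\max\{\gamma_2(\tau_1), \gamma_2(\tau_2), \gamma_3(\tau_1), \gamma_3(\tau_2)\}$; note $\gamma_2$ has mean $0.3606...$ and $\gamma_3$ has mean $0.6301...$, placing the mean gap at $0.2694$ — no wait, $1/\log 3 - 1/\log 4 = 0.1889$ is the gap \emph{of the digit sums}, so $\tau_1$ should sit between the $s_2$-mean and the $s_3$-mean, making $\gamma_2(\tau_1)$ an \emph{upper}-tail exponent for $s_2$ and $\gamma_3(\tau_2)$ a \emph{lower}-tail exponent for $s_3$; the binding constraint will be whichever of these four exponents is largest, and the claim is that the optimal choice makes this max equal to $0.970359$.

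\textbf{Main obstacle.} The analytic heart is the explicit large-deviation estimate: I need sharp enough (up to $N^{o(1)}$) two-sided bounds on $\#\{n < N : s_b(n) \le c\log N\}$, with the exponent $\gamma_b(c)$ identified as an explicit Legendre-transform/entropy expression, and I must handle the nuisance that $N$ is not a power of $b$ (so the leading digit ranges over a truncated set) and that the ``$\le c\log N$'' condition mixes integers of slightly different bit-lengths. The combinatorics (sum of a binomial-type coefficient over a range) is routine via Chernoff, but getting a clean closed form for $\gamma_b$ and then verifying numerically that the optimized $\max$ of the four exponents comes out below $0.970359$ for a window of width $0.1457205$ is the crux — and it is essentially a calculus exercise once the rate functions are written down. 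A secondary, purely bookkeeping obstacle is converting everything from ``$\log N$'' to ``$\log n$'': restricting to $n \in [N^{1-\epsilon}, N]$ or summing over dyadic blocks fixes this at the cost of an arbitrarily small loss in the exponent, which is why the paper's constants ($0.1457205$, $0.970359$) presumably have a little slack built in.
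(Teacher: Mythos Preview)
Your plan has a genuine gap at its core. You propose to choose a window $[\tau_1\log N,\tau_2\log N]$ of width $\tau_2-\tau_1=0.1457205$ and remove the four ``exceptional'' sets $\{s_2>\tau_2\log N\}$, $\{s_2<\tau_1\log N\}$, $\{s_3>\tau_2\log N\}$, $\{s_3<\tau_1\log N\}$, each allegedly of size $N^{\beta_i}$ with $\max\beta_i<1$. But the means of $s_2$ and $s_3$ on $[1,N]$ are $(\log N)/\log 4$ and $(\log N)/\log 3$, whose gap is $\bigl(\tfrac{1}{\log 3}-\tfrac{1}{\log 4}\bigr)\log N\approx 0.1889\log N>0.1457205\log N$. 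Hence no window of width $0.1457205\log N$ can contain both means. Whichever mean is excluded, one of the four sets is \emph{not} a tail event: for instance, if $\tau_2<1/\log 3$ then $\{n:s_3(n)>\tau_2\log N\}$ contains a positive proportion of $[1,N]$, so its exponent is $1$, and ``$N-4\max N^{\beta_i}$'' is useless. Your remark that ``$\tau_1$ should sit between the $s_2$-mean and the $s_3$-mean'' makes \emph{two} of the four sets non-rare. The earlier ``$2\gamma-1$'' intersection idea is also unfounded without joint information: two subsets of $[1,N]$ of size $N^{\gamma}$ with $\gamma<1$ need not intersect at all.

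The paper's argument repairs exactly this point. It centres everything at the $s_2$-mean $H\approx(\log N)/\log 4$. Then $\{\,|s_2(n)-H|\ge\lambda H\,\}$ (with $\lambda=0.14572049\log 4$) \emph{is} a two-sided large-deviation event, of size $\le N^{0.970359230}$. One does \emph{not} try to confine $s_3$ to the window by removing its complement; instead one proves a \emph{lower bound} for the single level set $\{n:s_3(n)=H\}$, a genuine large-deviation event for $s_3$. Because the variance of $s_3$ exceeds that of $s_2$, the $s_3$ rate function is shallower, and a direct multinomial count gives $\#\{s_3=H\}\ge N^{0.970359237}$. Since this lower bound strictly exceeds the upper bound for the $s_2$-tail, the set difference
\[
\{n:s_3(n)=H\}\setminus\{n:|s_2(n)-H|\ge\lambda H\}
\]
has at least $N^{0.970359237}-N^{0.970359230}>N^{0.970359}$ elements, and on it $|s_3(n)-s_2(n)|=|H-s_2(n)|<\lambda H\le 0.1457205\log n$. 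The missing idea in your proposal is precisely this asymmetric comparison: a lower bound on one marginal large-deviation count against an upper bound on the other, rather than trying to make both ``in-window'' events typical.
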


The mere information we use in proving Theorem \ref{main} is the knowledge of the separate (or marginal) distributions of $\left(s_2(n)\right)_n$ and $\left(s_3(n)\right)_n$, without using any further information concerning their joint distribution.\\

In Section 2, we provide a heuristic approach to Theorems \ref{almostall} and \ref{main}; the actual distribution of $\left(s_2(n)\right)_n$and $\left(s_3(n)\right)_n$ is studied in Section 3. The proof of Theorem \ref{main} is given in Sections 4.\\

Let us formulate three remarks as a conclusion to this introductory section.\\
\indent It seems that our present knowledge of the joint distribution of $s_2$ and $s_3$ (cf. for exemple C. Stewart \cite{St} for a Diophantine approach or M. Drmota \cite{Dr} for a probabilistic one) does not permit us to improve on Theorem \ref{main}.\\
\indent Theorem \ref{main} can be extended to any pair of distinct bases, say $q_1$ and $q_2$: more than computation, the Authors have deliberately chosen to present an idea to the Dedicatee.\\
\indent Although we could not prove it, we believe that Theorem \ref{main} represents the limit of our method.\\

\noindent \textbf{Acknowledgements}  The authors are indebted to Bernard Bercu for several discussions on the notion of ``spacing" between two random variables, a notion to be developped later. They also thank the Referees for their constructive comments. The first, third and fourth authors wish to thank the Indo-French centre CEFIPRA for the support permitting them to collaborate on this project (ref. 5401-A). The first named author acknowledges with thank the support of the French-Austrian project MuDeRa (ANR and FWF).\\

\section{A heuristic approach}
As a warm-up for the actual proofs, we sketch a heuristic approach. A positive integer $n$ may be expressed as
$$
n= \sum_{j=0}^{J(n)} \varepsilon_j(n) b^j, \text{ with } J(n)= \left\lfloor\frac{\log n}{\log b}\right\rfloor.
$$
If we consider an interval of integers around $N$, the smaller is $j$ the more equidistributed are the $\varepsilon_j(n)$'s, and the smaller are the elements of a family $\mathcal{J}=\{j_1 < j_2 < \cdots < j_s\}$ the more independent are the $\varepsilon_j(n)$'s for $j \in \mathcal{J}$. Thus a first model for $s_b(n)$ for $n$ around $N$ is to consider a sum of $\left\lfloor\frac{\log N}{\log b}\right\rfloor$ independent random variables uniformly distributed in $\{0, 1, \ldots, b-1\}$. Thinking of the central limit theorem, we even consider a continuous model, representing $s_b(n)$, for $n$ around $N$ by a Gaussian random variable $S_{b, N}$ with expectation and variance given by
$$
\mathbb{E}\left(S_{b, N}\right) = \frac{(b-1) \log N}{2 \log b} \text{ and } \mathbb{V}\left(S_{b, N}\right) = \frac{(b^2-1) \log N}{12 \log b}.
$$
In particular
$$
\mathbb{E}\left(S_{2, N}\right) = \frac{ \log N}{\log 4} \text{ and } \mathbb{E}\left(S_{3, N}\right) = \frac{\log N}{ \log 3} ,
$$
and their standard deviations have the order of magnitude $\sqrt{\log N}$.\\

\emph{Towards Theorem \ref{almostall}. }In \cite{BK}, it is proved that a central limit theorem actually holds for $s_b$; more precisely, the following proposition is the special case of the first relation in the main Theorem of \cite{BK}, with $f(n) = s_b(n)$ and $P(X)=X$.
\begin{prop}\label{B-K}
For any positive $y$, as $x$ tend to infinity, one has
\begin{equation}\notag
\frac{1}{x}\Card\left\{n < x \, : \, \left|s_b(n) - \mathbb{E}\left(S_{b, n}\right)     \right| < y\left( \mathbb{V}\left(S_{b, n}\right) \right)^{1/2}\right\} \rightarrow \frac{1}{\sqrt{2 \pi}}\int_{-y}^y e^{-t^2/2} dt.
\end{equation}
\end{prop}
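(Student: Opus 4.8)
The plan is to give a direct, self-contained proof built on the independence of base-$b$ digits and the classical central limit theorem, rather than invoking the full machinery of \cite{BK}. First I would treat the clean case $x = b^k$. If $n$ is drawn uniformly from $\{0, 1, \ldots, b^k - 1\}$ and we write $n = \sum_{j=0}^{k-1}\varepsilon_j(n) b^j$ with $\varepsilon_j(n) \in \{0, \ldots, b-1\}$, then the coordinate maps $n \mapsto \varepsilon_j(n)$ are \emph{independent} and each is uniformly distributed on $\{0, 1, \ldots, b-1\}$. Consequently $s_b(n) = \sum_{j=0}^{k-1}\varepsilon_j(n)$ is a sum of $k$ independent, identically distributed, uniformly bounded random variables, each with mean $(b-1)/2$ and variance $(b^2-1)/12$. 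The Lindeberg--L\'evy central limit theorem (whose hypotheses hold trivially, the summands being bounded) then yields
\begin{equation}\notag
\frac{1}{b^k}\,\Card\left\{n < b^k : \left|s_b(n) - \tfrac{(b-1)k}{2}\right| < y\,\sqrt{\tfrac{(b^2-1)k}{12}}\right\} \longrightarrow \frac{1}{\sqrt{2\pi}}\int_{-y}^y e^{-t^2/2}\,dt.
\end{equation}

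Next I would reconcile the normalisation above, which centres and scales by the fixed integer $k$, with the statement, which uses the $n$-dependent quantities $\mathbb{E}(S_{b,n})$ and $\mathbb{V}(S_{b,n})$. For $n \in [b^{k-1}, b^k)$ one has $k = \log n/\log b + O(1)$, so $\mathbb{E}(S_{b,n}) = \tfrac{(b-1)k}{2} + O(1)$ and $\left(\mathbb{V}(S_{b,n})\right)^{1/2} = \sqrt{(b^2-1)k/12}\,(1 + o(1))$. Since the limiting Gaussian distribution function is continuous, and the centring is perturbed only by $O(1) = o(\sqrt k)$ while the scale is perturbed by a factor $1 + o(1)$, replacing $k$ by $\log n/\log b$ throughout alters neither side of the displayed limit. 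This is a routine manipulation of Slutsky type.

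The substantive step, and the one I expect to be the main obstacle, is the passage from $x = b^k$ to a general $x$, where the digits $\varepsilon_j(n)$ for $n < x$ are no longer exactly independent. My plan is a block decomposition: with $k = \fl{\log x/\log b}$, fix $\ell = \ell(k)$ satisfying $k - \ell \to \infty$ but $k - \ell = o(\sqrt k)$ (for instance $k - \ell = \fl{\log k}$), and partition $[0, x)$ into the blocks $[mb^\ell, (m+1)b^\ell)$ for $0 \le m < M := \fl{x/b^\ell}$, together with one incomplete block of length $< b^\ell$. For $n = mb^\ell + r$ with $0 \le r < b^\ell$ there is no carrying, so $s_b(n) = s_b(m) + s_b(r)$, where $s_b(r)$ is a sum of $\ell$ i.i.d. uniform digits as before. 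The incomplete block and the block $m=0$ together contain $O(b^\ell) = o(x)$ integers and are negligible. In each remaining block the offset $s_b(m)$ is constant and bounded by $(b-1)(k-\ell) = o(\sqrt k)$, hence $o(1)$ on the standard-deviation scale; so uniformly in $m$ the inner count obeys the same Gaussian limit, where I would invoke P\'olya's theorem to upgrade the CLT to uniform convergence of the distribution function. Summing over the $M$ blocks and dividing by $x \sim M b^\ell$ delivers the asserted density. The delicate points to check are precisely that the choice $k - \ell = o(\sqrt k)$ renders the offset $s_b(m)$ negligible against the standard deviation $\asymp \sqrt k$, and that the variation of $\mathbb{E}(S_{b,n})$ across a block (an $O(1)$ fluctuation once $m \ge 1$) is harmless.
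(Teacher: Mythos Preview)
Your argument is sound: the block decomposition with $k-\ell\to\infty$ and $k-\ell=o(\sqrt k)$ does exactly what is needed, making both the offset $s_b(m)$ and the discrepancy between $\mathbb{E}(S_{b,n})$ and $(b-1)\ell/2$ negligible on the $\sqrt\ell$ scale, while the leftover block and $m=0$ contribute $O(b^\ell)=o(x)$ integers. The appeal to P\'olya's uniform convergence theorem is the right way to sum the block estimates.

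That said, this is a genuinely different route from the paper's. The paper does not prove Proposition~\ref{B-K} at all: it simply records that the statement is the special case $f=s_b$, $P(X)=X$ of the main theorem of Bassily and K\'atai \cite{BK}, which establishes a central limit theorem for general $q$-additive functions evaluated along polynomial sequences. Your approach trades that citation for a self-contained elementary argument, exploiting the very special feature of $s_b$ on $[0,b^k)$ that the digits are \emph{exactly} independent and identically distributed, so that the classical Lindeberg--L\'evy theorem applies directly and only the passage to general $x$ requires work. What the paper's route buys is brevity and generality (the Bassily--K\'atai theorem covers far more than $s_b$); what yours buys is transparency and independence from external machinery, at the cost of a page of routine estimates. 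Either is acceptable here, since Proposition~\ref{B-K} is used only to motivate Theorem~\ref{almostall} and plays no role in the proof of Theorem~\ref{main}.
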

\noindent Theorem \ref{almostall} easily follows from Proposition \ref{B-K}: the set under our consideration is the intersection of 2 sets of density 1.\\

\emph{Towards Theorem \ref{main}. }If we wish to deal with a difference $|s_3(n) - s_2(n)| < u \log n$ for some $u < \left(\frac{1}{\log 3} - \frac{1}{\log 4}\right)$ we must, by what we have seen above, consider events of asymptotic probability zero, which means that a heuristic approach must be substantiated by a rigorous proof. Our key remark is that the variance of $S_{3, N}$ is larger than that of $S_{2, N}$; this implies the following: the probability that $S_{3, N}$ is at a distance $d$ from its mean is larger that the probability that $S_{2,  N}$ is at a distance $d$ from its mean. So, we have the hope to find some $u < \left(\frac{1}{\log 3} - \frac{1}{\log 4}\right)$ such that the probability that 
$\left|S_{2, N}-\mathbb{E}(S_{2, N})\right| > u \log N$ 
is smaller than the probability that $S_{3, N}$ is very close to $\mathbb{E}(S_{2, N})$. This will imply that for some $\omega$ we have $\left|S_{3,N}(\omega) - S_{2, N}(\omega)\right| \le u \log N$.\\

\section{On the distribution of the values of $s_2(n)$ and $s_3(n)$}

In order to prove Theorem \ref{main} we need\\
$\bullet$ an upper bound for the tail of the distribution of $s_2$,\\
$\bullet$ a lower bound for the tail of the distribution of $s_3$.\\

\subsection{Upper bound for the tail of the distribution of $s_2$}
\begin{prop}\label{uppertail2}
Let $\lambda \in (0, 1)$. For any 
$$
\nu > 1- \left((1-\lambda)\log(1-\lambda) + (1 + \lambda)\log(1+\lambda)\right)/\log 4   
$$
and any sufficiently large integer $H$, we have
\begin{equation}\label{equppertail2}
\Card \{ n < 2^{2H} : \left|s_2(n) - H\right| \ge \lambda H\} \le 2^{2H \nu}.
\end{equation}
\end{prop}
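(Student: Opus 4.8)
The plan is to estimate the number of $n < 2^{2H}$ with $|s_2(n)-H| \ge \lambda H$ by a direct binomial counting argument, using the fact that the digits of $n$ in base $2$ over the first $2H$ positions are genuinely independent fair coin flips. Concretely, every integer $n < 2^{2H}$ is uniquely $\sum_{j=0}^{2H-1}\varepsilon_j 2^j$ with $\varepsilon_j\in\{0,1\}$, and $s_2(n)$ is exactly the number of $j$ with $\varepsilon_j=1$. Hence
\begin{equation}\notag
\Card\{n<2^{2H}: s_2(n)=k\} = \binom{2H}{k},
\end{equation}
and the quantity to be bounded is $\sum_{k : |k-H|\ge \lambda H} \binom{2H}{k}$.

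The next step is a standard large-deviation (Chernoff-type) bound on the binomial tail. By symmetry of $\binom{2H}{k}$ about $k=H$ it suffices to bound $2\sum_{k \le (1-\lambda)H}\binom{2H}{k}$. Writing $k = (1-\lambda)H$ at the endpoint, the entropy estimate for binomial coefficients gives
\begin{equation}\notag
\sum_{k\le (1-\lambda)H}\binom{2H}{k} \le 2^{2H\, \mathcal{H}\!\left(\tfrac{1-\lambda}{2}\right)},
\end{equation}
where $\mathcal{H}$ is the binary entropy function in base $2$; one then checks that $2H\,\mathcal{H}\!\left(\tfrac{1-\lambda}{2}\right)$ equals exactly $2H\bigl(1 - ((1-\lambda)\log(1-\lambda)+(1+\lambda)\log(1+\lambda))/\log 4\bigr)$ after converting logarithms. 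So the total count is at most $2\cdot 2^{2H\tau}$ with $\tau = 1 - ((1-\lambda)\log(1-\lambda)+(1+\lambda)\log(1+\lambda))/\log 4$, and since $\nu > \tau$ strictly, the factor $2$ (and any polynomial slack from the crude entropy bound) is absorbed for all sufficiently large $H$, giving $\le 2^{2H\nu}$.

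The only real work is making the entropy estimate for the partial binomial sum clean and verifying the arithmetic identity between $\mathcal{H}((1-\lambda)/2)$ and the expression in the statement; both are routine. I would either quote a standard reference for the bound $\sum_{k\le \alpha m}\binom{m}{k}\le 2^{m\mathcal{H}(\alpha)}$ (valid for $\alpha \le 1/2$), or derive it in two lines from $\sum_k \binom{m}{k} x^{k} \ge \binom{m}{j} x^{j}$ optimized at $x = \alpha/(1-\alpha)$. The one point to be slightly careful about is the direction of the inequality $\nu > \tau$: because we lose a constant (and really only a constant) in passing from the single term $\binom{2H}{(1-\lambda)H}$ to the whole tail and back, the strictness of $\nu > \tau$ is exactly what we need, and no quantitative control on how large $H$ must be is required for the statement as phrased. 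The main (minor) obstacle is simply bookkeeping with $\log$ versus $\log_2$ so that the exponent matches the stated formula verbatim.
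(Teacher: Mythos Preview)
Your proposal is correct and follows essentially the same approach as the paper: both reduce the count to the binomial tail $\sum_{|k-H|\ge \lambda H}\binom{2H}{k}$ and bound it by the large-deviation exponent $\tau = 1 - ((1-\lambda)\log(1-\lambda)+(1+\lambda)\log(1+\lambda))/\log 4$, absorbing the polynomial slack into the strict inequality $\nu>\tau$. The only cosmetic difference is that the paper applies Stirling's formula to the extremal binomial coefficient $\binom{2H}{(1\pm\lambda)H}$ and then multiplies by the number of terms $2H$, whereas you invoke the packaged Chernoff/entropy bound $\sum_{k\le \alpha m}\binom{m}{k}\le 2^{m\mathcal H(\alpha)}$ on the whole tail at once; the underlying computation and the resulting exponent are identical.
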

\begin{proof}
When $b=2$, the distribution of the values of $s_2(n)$ is simply binomial; we thus get
\begin{equation}\notag
\Card \left\{0 \le n < 2^{2H} : s_2(n) = m \right\} =  \binom{2H}{m}.
\end{equation}
Using the fact that the sequence (in $m$) $\binom{2H}{m}$ is symmetric and unimodal plus Stirling's formula, we obtain that when $m \le (1-\lambda)H$ or $m \ge (1+\lambda)H$, one has
\begin{eqnarray}\notag
\binom{2H}{m} &\le &H^{O(1)}\frac{(2H)^{2H}}{((1-\la)H)^{(1-\la)H}((1+\la)H)^{(1+\la)H}}\\
\notag
&\le& H^{O(1)}\left(\frac{2^2}{(1-\la)^{(1-\la)}(1+\la)^{(1+\la)}}\right)^H\\
\notag
&\le& H^{O(1)}\left(2^{\left(1- \left((1-\lambda)\log(1-\lambda) + (1 + \lambda)\log(1+\lambda)\right)/2\log 2    \right)} \right)^{2H}.
\end{eqnarray}
Relation (\ref{equppertail2}) comes from the above inequality and the fact that the left hand side of (\ref{equppertail2}) is the sum of at most  $2H$ such terms.
\end{proof}

\subsection{Lower bound for the tail of the distribution of $s_3$}
\begin{prop}\label{lowertail3}
Let $L$ be sufficiently large an integer. We have
\begin{equation}\label{eqlowertail3}
\Card\{n < 3^{L} : s_3(n) = \lfloor L\log3 / \log 4 \rfloor\} \ge 3^{0.970359238 L}.
\end{equation}
\end{prop}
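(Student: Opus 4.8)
The plan is to count integers $n < 3^L$ whose base-$3$ representation (with $L$ digits, leading zeros allowed) has digit sum exactly $m := \lfloor L\log 3/\log 4\rfloor$, and then extract an exponential lower bound. The number of such $n$ is the coefficient of $x^m$ in $(1+x+x^2)^L$, equivalently the number of ways to choose a multiset of digits from $\{0,1,2\}$ of total $m$ over $L$ positions. First I would write this count exactly as a sum over the number $k$ of digits equal to $2$: it equals $\sum_k \binom{L}{k}\binom{L-k}{m-2k}$, where the second factor counts the placement of the $m-2k$ digits equal to $1$ among the remaining $L-k$ positions. To get a lower bound it suffices to retain a single well-chosen term; I would pick $k$ proportional to $L$, say $k = \lfloor\alpha L\rfloor$ and $m-2k = \lfloor\beta L\rfloor$ with $\alpha,\beta\ge 0$, $\alpha+\beta\le 1$, and $2\alpha+\beta = \log 3/\log 4$ (so that the target digit sum is met up to rounding).

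Next I would estimate that single multinomial term $\binom{L}{\alpha L,\ \beta L,\ (1-\alpha-\beta)L}$ by Stirling's formula, exactly as in the proof of Proposition \ref{uppertail2}: up to a factor $L^{O(1)}$ it equals $\big(\alpha^{-\alpha}\beta^{-\beta}(1-\alpha-\beta)^{-(1-\alpha-\beta)}\big)^L$. Writing this as $3^{cL}$, the exponent is
\begin{equation}\notag
c(\alpha,\beta) = -\frac{\alpha\log\alpha + \beta\log\beta + (1-\alpha-\beta)\log(1-\alpha-\beta)}{\log 3},
\end{equation}
i.e. the base-$3$ entropy of the distribution $(\alpha,\beta,1-\alpha-\beta)$. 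I would then maximize $c$ subject to the linear constraint $2\alpha+\beta=\log3/\log4$. This is a one-dimensional smooth concave optimization; a Lagrange multiplier computation gives $\alpha^2 = \lambda^2\beta(1-\alpha-\beta)$ type relations, which reduce to finding the parameter for which the weighted digit distribution is the max-entropy one with prescribed mean $\log3/\log4$ — concretely, the distribution $(p_0,p_1,p_2)\propto(1,t,t^2)$ with $t$ chosen so that $p_1+2p_2 = \log3/\log4 = 1.13093\ldots$. Solving the resulting quadratic/rational equation for $t$ and plugging back should yield $c = 0.970359238\ldots$, matching the claimed exponent; I would carry enough decimal precision to be safe, since the statement is a strict inequality with many digits.

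Finally I would assemble the bound: the chosen term is $\ge L^{-O(1)} 3^{c(\alpha,\beta)L}$, the rounding of $\alpha L$ and $\beta L$ to integers perturbs the exponent by only $O(\log L / L) \cdot L = O(\log L)$ (absorbable), and since $c(\alpha,\beta) > 0.970359238$ strictly, the polynomial loss $L^{-O(1)}$ is swallowed for $L$ sufficiently large, giving $\ge 3^{0.970359238 L}$. The main obstacle I anticipate is purely the numerical verification that the optimal entropy value genuinely exceeds the stated constant — i.e. controlling the optimization and the arithmetic precisely enough that the strict inequality is rigorous rather than merely "numerically true"; the combinatorial identity and the Stirling estimate are routine and parallel to the $b=2$ case already handled above.
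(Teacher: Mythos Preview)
Your approach is essentially identical to the paper's: both express the count as a multinomial sum $\sum \frac{L!}{\ell_0!\ell_1!\ell_2!}$ over digit frequencies, retain a single term, and estimate it by Stirling. The only difference is that the paper simply hands you the near-optimal choice $\ell_2 = \lfloor 0.235001144\,L\rfloor$ outright rather than deriving it via the entropy maximization / Lagrange argument you describe.
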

\begin{proof}
The positive integer $L$ being given, we write any integer $n \in [0, 3^L)$ in its non necessarily proper representation, as a chain of exactly $L$ characters, $\ell_i(n) $ of them being equal to $i$, for $i \in \{0, 1, 2\}$, the sum $\ell_0 (n)+ \ell_1(n) + \ell_2(n)$ being equal to $L$, the total number of digits in this representation\footnote{For example, when $L=5$, the number "sixty" will be represented as $02020$. Happy palindromic birthday, Robert!}. One has 
\begin{equation}\label{probmodel}
\Card \left\{0 \le n < 3^L : s_3(n) = m \right\}   = \sum_{\substack{\ell_0 + \ell_1 + \ell_{2} = L \\ \ell_1 + 2 \ell_2  = m}} \frac{L!}{\ell_0 ! \ell_1 ! \ell_{2} !} .
\end{equation}
\noindent In order to get a lower bound for the left hand side of (\ref{probmodel}), it is enough to select one term in its right hand side. We choose
\begin{equation}\notag
\l_2 = \lfloor 0.235001144 L\rfloor \, ; \, \l_1 = \lfloor L \log 3 / \log 4\rfloor -2\; \l_2 \, ; \, \l_0 = L -\l_1 -\l_2.
\end{equation}
\noindent A straightforward application of Stirling's formula, similar to the one used in the previous subsection, leads to (\ref{eqlowertail3}).
\end{proof}

\section{Proof of Theorem \ref{main}}
Let $N$ be sufficiently large an integer. We let $K=\lfloor \log N / \log 3\rfloor -2$ and $H= \lfloor (K-1) \log 3 / \log 4\rfloor +2$. We notice that we have
\begin{equation}\label{HKN}
N/81 \le 3^{K-1} < 3^K < 2^{2H} \le N.
\end{equation}
We use Proposition \ref{uppertail2} with $\lambda = 0.14572049 \log 4$, which leads to
\begin{equation}\label{majo2}
\Card \{ n \le 2^{2H} : \left|s_2(n) - H\right| \ge \lambda H\} \le 2^{0.970359230\times2H} \le N^{0.970359230}.
\end{equation}
For any $n \in [2\cdot 3^{K-1}, 3^{K})$ we have $s_3(n) = 2 + s_3(n-2\cdot 3^{K-1})$ and so it follows from Proposition \ref{lowertail3} that we have
\begin{eqnarray}\notag
& & \Card\{n \in [2\cdot 3^{K-1}, 3^{K}): s_3(n) = H\} \\
\notag
& &  = \Card\{n < 3^{K-1}) : s_3(n) = H-2\}\\
 \notag
 & &= \Card\{n < 3^{K-1}) : s_3(n)= \lfloor (K-1) \log 3 / \log 4\rfloor\}\\
\notag
& &\ge 3^{0.970359238(K-1)} \ge N^{0.970359237}.
\end{eqnarray}
This implies that we have
\begin{equation}\label{mino3}
\Card \{ n \le 2^{2H} : s_3(n) = H\} \ge N^{0.970359237}.
\end{equation}
From (\ref{majo2}) and (\ref{mino3}), we deduce that for $N$ sufficiently large, we have
\begin{equation}\notag
\Card\{n \le N : | s_2(n)- s_3(n) | \le 0.1457205 \log n\} \ge N^{0.970359}.
\end{equation}
\begin{flushright}
$\Box$
\end{flushright}

\bigskip

\noindent \textbf{Addresses}\\

\noindent Jean-Marc DESHOUILLERS\\
jean-marc.deshouillers@math.u-bordeaux.fr\\
Bordeaux INP, Universit\'{e} de Bordeaux, CNRS \\
Institut Math\'{e}matique de Bordeaux, UMR 5251 \\
33405 Talence, France\\

\noindent Laurent HABSIEGER\\
habsieger@math.univ-lyon1.fr\\
Universit\'e de Lyon, CNRS UMR 5208, Universit\' e Claude Bernard Lyon 1,
Institut Camille Jordan\\ 
69622 Villeurbanne Cedex, France \\

\noindent Shanta LAISHRAM\\
shanta@isid.ac.in\\
Indian Statistical Institute\\
7 SJS Sansanwal Marg\\
110016 New Delhi, India\\

\noindent Bernard LANDREAU\\
bernard.landreau@univ-angers.fr\\
Universit\'{e} d'Angers, CNRS\\
LAREMA Laboratoire Angevin de REcherche, UMR 6093, FR 2962\\
49045 Angers, France\\

\noindent \textbf{Key words} : sums of digits, expansion in bases 2 and 3\\

\noindent \textbf{AMS 2010 Classification number}: 11K16\\

\end{document}